\newcommand{\Mod}[1]{\ (\mathrm{mod}\ #1)}
\theoremstyle{plain}
\newtheorem{thm}{Theorem}
\newtheorem{lem}[thm]{Lemma}
\newtheorem{prob}[thm]{Problem}
\newtheorem{que}[thm]{Question}
\newtheorem{cor}[thm]{Corollary}
\title{Further results on discrete unitary invariance}
\begin{document}
\author{Jesse Geneson\\
Department of Mathematics, PSU, State College, PA 16801 USA \\ geneson@gmail.com}
\date{}
\maketitle

\begin{abstract}
In arXiv:1607.06679, Marcus proved that certain functions of multiple matrices, when summed over the symmetries of the cube, decompose into functions of the original matrices. In this note, we generalize the results from the Marcus paper to a larger class of functions of multiple matrices. We also answer a problem posed in the Marcus paper.
\end{abstract}

Marcus \cite{marc} exhibited certain functions that take multiple matrix inputs and are invariant with respect to unitary conjugation among subsets of input matrices. These formulas generalize results in \cite{marc1,marc2} that were used to develop a finite definition of free probability, and similar formulas were used in \cite{marc3} to prove the existence of Ramanujan graphs of all sizes and degrees.

The formulas in \cite{marc} are for sums of determinants and sums of products of pairs of determinants. In this paper, we extend the formulas to sums of products of $d$-tuples of determinants for all $d$. We also solve the following problem posed at the end of \cite{marc}:

\begin{prob}
Find a formula for $\sum_{P \in \mathscr{P}_{n}} [P]_{S,T}[P^{-1}]_{U,V}$ that holds for all $S, T, U, V \in \binom{[n]}{k}$.
\end{prob}

In Section \ref{formula}, we solve the problem above. In Section \ref{gene}, we generalize the results from \cite{marc}.

\section{A formula for $\sum_{P \in \mathscr{P}_{n}} [P]_{S,T}[P^{-1}]_{U,V}$}\label{formula}

As in \cite{marc}, define $\mathscr{P}_{n}$ to be the class of $n \times n$ permutation matrices over a commutative ring $\mathbb{K}$.

Given $X \subset [n]$ and $t \in X$, let $r(t, X)$ be the number of elements of $X$ that are less than $t$. Let $s(X,Y) = \sum_{t \in X \wedge t \notin Y} r(t, X)-\sum_{t \in Y \wedge t \notin X} r(t, Y)$.

In \cite{marc}, Marcus asked about a formula for $\sum_{P \in \mathscr{P}_{n}} [P]_{S,T}[P^{-1}]_{U,V}$ for all $S, T, U, V \in \binom{[n]}{k}$. We find a formula below.

\begin{lem}\label{main}
If $S, T, U, V \in \binom{[n]}{k}$ and $|S \cap V| = j$, then $\sum_{P \in \mathscr{P}_{n}} [P]_{S,T}[P^{-1}]_{U,V} = j! (n-2k+j)! (-\mathds{1})^{s(T,U)+s(S,V)}\delta_{\left\{|S \cap V| = |T \cap U| \geq k-1\right\}}$.
\end{lem}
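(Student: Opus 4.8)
The plan is to pass to a sum over permutations, isolate the support of the summand, kill the generic case by a sign-reversing involution, and pin down the surviving cases with a short inversion count.

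First I would record the transpose identity $[P^{-1}]_{U,V}=[P]_{V,U}$, valid because $P^{-1}=P^{\mathsf T}$ for a permutation matrix and the determinant is transpose-invariant; this rewrites the target as $\sum_{P\in\mathscr{P}_{n}}[P]_{S,T}[P]_{V,U}$. Writing $P=P_\pi$ for the underlying permutation $\pi$, the minor $[P_\pi]_{S,T}$ is nonzero exactly when $\pi(S)=T$, in which case it equals the sign of the order-induced bijection $S\to T$; likewise $[P_\pi]_{V,U}\neq 0$ iff $\pi(V)=U$. A product term therefore survives only when $\pi(S)=T$ and $\pi(V)=U$, and since $\pi$ is a bijection this forces $\pi(S\cap V)=T\cap U$, hence $|S\cap V|=|T\cap U|$. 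This already yields the equality half of the indicator: if the two intersections differ in size the sum is empty. Assuming $|S\cap V|=|T\cap U|=j$, each admissible $\pi$ splits as a product of four independent bijections on the blocks $S\cap V\to T\cap U$, $S\setminus V\to T\setminus U$, $V\setminus S\to U\setminus T$, and $[n]\setminus(S\cup V)\to[n]\setminus(T\cup U)$; the last block enters neither sign, so summing over it contributes the factor $(n-2k+j)!$.

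The conceptual heart is the vanishing when $j\le k-2$. Here $|S\setminus V|\ge 2$, so I pick distinct $x_1,x_2\in S\setminus V$ and consider $\pi\mapsto\pi\circ(x_1\,x_2)$. This is a fixed-point-free involution on the admissible permutations: it leaves $\pi|_V$ untouched, so $[P]_{V,U}$ is unchanged, while it alters $\pi|_S$ by a transposition, flipping the sign of $[P]_{S,T}$. Thus the summand changes sign, the contributions cancel in pairs, and the sum is $0$, which matches the requirement $j\ge k-1$ in $\delta_{\{|S\cap V|=|T\cap U|\geq k-1\}}$.

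It remains to evaluate the two surviving cases. When $j=k$ we have $S=V$ and $T=U$, the two minors coincide, their product is identically $1$, and the sum counts the permutations with $\pi(S)=T$, giving $k!(n-k)!=j!(n-2k+j)!$, consistent with the sign $(-\mathds{1})^{s(T,T)+s(S,S)}=1$. When $j=k-1$, the maps $S\setminus V=\{x\}\to T\setminus U=\{p\}$ and $V\setminus S=\{y\}\to U\setminus T=\{q\}$ are forced, and the only freedom is the bijection $\pi_1$ on $S\cap V\to T\cap U$. Modifying $\pi_1$ by a transposition flips both signs at once, so the product $[P]_{S,T}[P]_{V,U}$ is constant over the $(k-1)!$ choices of $\pi_1$, and I evaluate it at the order-preserving $\pi_1$. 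For that choice a direct inversion count shows the induced bijection $S\to T$ has exactly $|r(x,S)-r(p,T)|$ inversions, so $[P]_{S,T}=(-\mathds{1})^{r(x,S)+r(p,T)}$, and similarly $[P]_{V,U}=(-\mathds{1})^{r(y,V)+r(q,U)}$. Summing over $\pi_1$ and the complementary block yields $(k-1)!(n-k-1)!(-\mathds{1})^{r(x,S)+r(p,T)+r(y,V)+r(q,U)}$, and I then check that this exponent has the same parity as $s(T,U)+s(S,V)=\bigl(r(p,T)-r(q,U)\bigr)+\bigl(r(x,S)-r(y,V)\bigr)$, completing the identification with the claimed formula. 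I expect the inversion bookkeeping in this final step to be the main obstacle; the structural decomposition and the involution are comparatively routine.
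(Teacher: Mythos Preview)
Your proposal is correct and follows essentially the same route as the paper: rewrite via $[P^{-1}]_{U,V}=[P]_{V,U}$, restrict to $\pi$ with $\pi(S)=T$ and $\pi(V)=U$, kill the case $j\le k-2$ by the sign-reversing involution transposing two elements of $S\setminus V$, and then evaluate $j=k$ and $j=k-1$ directly. The only cosmetic difference is in the $j=k-1$ sign computation, where the paper argues that $(P_\pi)_{S,T}$ becomes $(P_\pi)_{V,U}$ after $s(S,V)$ row swaps and $s(T,U)$ column swaps, while you reach the same parity via an explicit inversion count at the order-preserving $\pi_1$.
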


\begin{proof}
We write $\sum_{P \in \mathscr{P}_{n}} [P]_{S,T}[P^{-1}]_{U,V} = \sum_{\pi \in S_{n}} [P_{\pi}]_{S,T}[P_{\pi}]_{V,U}$. For $\pi \in S_{n}$, let $\pi(U)$ denote the image of the elements of $U$ under $\pi$. As observed in \cite{marc}, $[P_{\pi}]_{U,V} = \emptyset$ whenever $\pi(U) \neq V$. Thus the terms of the sum will be zero unless the following conditions (*) are satisfied: $|S \cap V| = |T \cap U|$, $\pi(S \cap V) = T \cap U$, $\pi(S-(S \cap V)) = T-(T \cap U)$, and $\pi(V-(S \cap V)) = U-(T \cap U)$. 

If $|S-(S \cap V)| \geq 2$, then the terms of the sum will add to zero since there will be the same number of $1$ and $-1$ terms in the sum. This is because $[P_{\pi}]_{S,T} = -[P_{\pi'}]_{S,T}$ for every $\pi$ that satisfies the conditions (*), where $\pi'$ is the permutation obtained from $\pi$ by transposing the values of $\pi$ for the least two elements of $S-(S \cap V)$, and $\pi'$ also satisfies the conditions (*).

Therefore the sum will be zero unless $|S \cap V| = |T \cap U| \geq k-1$, $\pi(S \cap V) = T \cap U$, $\pi(S-(S \cap V)) = T-(T \cap U)$, and $\pi(V-(S \cap V)) = U-(T \cap U)$. There are $j! ((k-j)!)^2 (n-2k+j)!$ ways to select $\pi$ such that $|S \cap V| = |T \cap U| =  j$, $\pi(S \cap V) = T \cap U$, $\pi(S-(S \cap V)) = T-(T \cap U)$, and $\pi(V-(S \cap V)) = U-(T \cap U)$. Since the sum is zero unless $j \geq k-1$, we can simplify $j! ((k-j)!)^2 (n-2k+j)!$ to $j! (n-2k+j)!$. 

If $j = k$, then the formula in the lemma reduces to the formula proved in \cite{marc} since $s(T,U)+s(S,V) = 0$ when $j = k$. Otherwise if $j = k-1$, then each nonzero term of the sum is equal to $(-\mathds{1})^{s(T,U)+s(S,V)}$ since $(P_{\pi})_{S,T}$ can be transformed into $(P_{\pi})_{V,U}$ using $s(S,V)$ row swaps and $s(T,U)$ column swaps.
\end{proof}

\section{Generalizing the results from arXiv:1607.06679}\label{gene}

As in \cite{marc}, define $\mathscr{Q}_{n}$ to be the class of $n \times n$ sign matrices over the commutative ring $\mathbb{K}$. Note that a sign matrix is a diagonal matrix with all diagonal entries equal to either $\mathds{1}$ or $-\mathds{1}$.

We use the following lemma from \cite{marc} to prove a generalization of the determinant sums in \cite{marc}.

\begin{lem} \label{mrc}
\cite{marc} For $A \in \mathbb{K}^{m \times n}$ and $B \in \mathbb{K}^{n \times p}$, we have $[A B]_{S,T} = \sum_{U \in \binom{[n]}{k}} [A]_{S,U}[B]_{U,T}$ for all $S \in \binom{[m]}{k}$ and $T \in \binom{[p]}{k}$.
\end{lem}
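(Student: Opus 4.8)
The plan is to prove this by the direct Leibniz expansion of the determinant, which is the standard route to the Cauchy--Binet formula and goes through verbatim over an arbitrary commutative ring $\mathbb{K}$. Writing $S = \{s_1 < \cdots < s_k\}$ and $T = \{t_1 < \cdots < t_k\}$, we have $[AB]_{S,T} = \det\big((AB)_{s_i,t_j}\big)_{i,j=1}^k$, and expanding the entry $(AB)_{s_i,t_j} = \sum_{\ell=1}^n A_{s_i,\ell}B_{\ell,t_j}$ inside the Leibniz formula gives
\[
[AB]_{S,T} = \sum_{\sigma \in S_k} \mathrm{sgn}(\sigma)\prod_{i=1}^k \sum_{\ell=1}^n A_{s_i,\ell}B_{\ell,t_{\sigma(i)}}.
\]

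First I would expand the product of sums into a single sum over all functions $f:[k]\to[n]$, interchange the summation order so that the outer sum ranges over $f$, and factor out the $A$-entries, which do not involve $\sigma$. This yields
\[
[AB]_{S,T} = \sum_{f:[k]\to[n]} \Big(\prod_{i=1}^k A_{s_i,f(i)}\Big)\sum_{\sigma \in S_k}\mathrm{sgn}(\sigma)\prod_{i=1}^k B_{f(i),t_{\sigma(i)}}.
\]
The inner $\sigma$-sum is the determinant of the matrix with rows $f(1),\dots,f(k)$ and columns $T$; when $f$ is not injective two of these rows coincide and the determinant vanishes, so only injective $f$ survive.

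Next I would regroup the surviving injective functions by their image $U = f([k]) \in \binom{[n]}{k}$. Writing $U = \{u_1 < \cdots < u_k\}$, each such $f$ has the form $f(i) = u_{\tau(i)}$ for a unique $\tau \in S_k$. Permuting the rows of the inner determinant back into sorted order shows it equals $\mathrm{sgn}(\tau)\,[B]_{U,T}$, so the contribution of all $f$ with image $U$ is $[B]_{U,T}\sum_{\tau \in S_k}\mathrm{sgn}(\tau)\prod_{i=1}^k A_{s_i,u_{\tau(i)}} = [A]_{S,U}[B]_{U,T}$, the last $\tau$-sum being precisely the Leibniz expansion of $[A]_{S,U}$. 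Summing over all $U \in \binom{[n]}{k}$ gives the claimed identity.

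The main obstacle is the sign bookkeeping in this last step: one has to confirm that the $\mathrm{sgn}(\tau)$ extracted by sorting the rows of the $B$-minor is exactly the factor needed to convert the $\tau$-sum of $A$-products into the sorted minor $[A]_{S,U}$, leaving no residual sign. An approach that bypasses this bookkeeping altogether is the exterior-algebra argument: the $k$-th exterior power is a functor, so $\Lambda^k(AB) = (\Lambda^k A)(\Lambda^k B)$ over any commutative ring, and reading off the entries of this matrix identity in the standard basis of $\Lambda^k$ indexed by $\binom{[n]}{k}$ reproduces the stated formula directly.
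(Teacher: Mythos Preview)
Your argument is the standard Cauchy--Binet proof and is correct, including the sign bookkeeping you flag: permuting the rows of the $B$-minor by $\tau$ contributes $\mathrm{sgn}(\tau)$, and this is exactly the sign needed so that $\sum_{\tau\in S_k}\mathrm{sgn}(\tau)\prod_i A_{s_i,u_{\tau(i)}}$ is the Leibniz expansion of $[A]_{S,U}$, with no residual sign. The exterior-algebra remark is also a valid alternative.

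There is nothing to compare against here: the paper does not prove this lemma at all but simply quotes it from \cite{marc} as a known identity. Your write-up therefore supplies strictly more than the paper does for this statement.
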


The next lemma generalizes Section $4$ of \cite{marc}, which had formulas for the cases when $d \leq 2$.

\begin{lem}\label{gen}
For $j = 0, \ldots, d-1$, let $A_{j} \in \mathbb{K}^{n_{j} \times n_{j+1 \Mod{d}}}$, $B_j \in \mathbb{K}^{p_j \times n_j}$, and $C_j \in \mathbb{K}^{n_{j+1 \Mod{d}} \times r_j}$.
Then $\sum_{Q_j \in \mathscr{Q}_{n_j}: j = 0, \dots, d-1} \sum_{P_j \in \mathscr{P}_{n_j}: j = 0, \dots, d-1}$\\
$\Pi_{j = 0}^{d-1} [B_j (Q_j P_j) A_j (Q_{j+1 \Mod{d}}P_{j+1 \Mod{d}})^{-1}C_j]_{X_j,Y_j} =$\\
$\delta_{\left\{k_0 = k_1 = \dots = k_{d-1}\right\}} (\mathds{1}+\mathds{1})^{\sum_{j = 0}^{d-1} n_j} (\Pi_{j = 0}^{d-1} k_j! (n_j-k_j)! [B_j C_{j-1 \Mod{d}}]_{X_j,Y_{j-1 \Mod{d}}}) [\Pi_{j = 0}^{d-1} A_j]^{(k_0)}$ for all $X_j \in \binom{[p_j]}{k_j}$, $Y_j \in \binom{[r_j]}{k_j}$.
\end{lem}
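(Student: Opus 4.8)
The plan is to expand every bracket with the Cauchy--Binet formula (Lemma~\ref{mrc}), interchange the order of summation so that the sums over the $Q_j$ and $P_j$ sit innermost, and exploit the fact that each signed permutation $Q_jP_j$ occurs in exactly two of the brackets. Writing $M_j = Q_jP_j$ and using $M_j^{-1}=P_j^{-1}Q_j$ (valid since $Q_j^2 = I$), repeated application of Lemma~\ref{mrc} turns the $j$th bracket into
\[
[B_j M_j A_j M_{j+1\Mod{d}}^{-1} C_j]_{X_j,Y_j}=\sum_{S_j,T_j,U_j,V_j}[B_j]_{X_j,S_j}[M_j]_{S_j,T_j}[A_j]_{T_j,U_j}[M_{j+1\Mod{d}}^{-1}]_{U_j,V_j}[C_j]_{V_j,Y_j},
\]
where $S_j,T_j\in\binom{[n_j]}{k_j}$ and $U_j,V_j\in\binom{[n_{j+1\Mod{d}}]}{k_j}$, and all indices are read modulo $d$. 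Since $M_j$ appears only in the $j$th bracket (through $[M_j]_{S_j,T_j}$) and in the $(j-1)$th bracket (through $[M_j^{-1}]_{U_{j-1},V_{j-1}}$), the whole expression factors over $j$ into a product of inner sums
\[
f_j=\sum_{Q_j\in\mathscr{Q}_{n_j}}\sum_{P_j\in\mathscr{P}_{n_j}}[M_j]_{S_j,T_j}\,[M_j^{-1}]_{U_{j-1},V_{j-1}}.
\]

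First I would evaluate $f_j$, which is the heart of the argument. A bracket of a signed permutation is nonzero only when the underlying permutation $\pi_j$ carries columns to rows, so the summand vanishes unless $\pi_j(T_j)=S_j$ and $\pi_j(U_{j-1})=V_{j-1}$, and in that case it equals a residual sign $\mathrm{sgn}_1\cdot\mathrm{sgn}_2$ (from the two permutation minors) times the sign-matrix monomial $\prod_{s\in S_j}\epsilon_s\prod_{v\in V_{j-1}}\epsilon_v$, where $\epsilon$ records the diagonal of $Q_j$. Summing this monomial over all $(\mathds{1}+\mathds{1})^{n_j}$ choices of $Q_j$ yields $(\mathds{1}+\mathds{1})^{n_j}$ when $S_j=V_{j-1}$ and $0$ otherwise; this character sum is precisely what forces $S_j=V_{j-1}$, hence $k_j=k_{j-1}$, which cycles around to produce the factor $\delta_{\{k_0=\dots=k_{d-1}\}}$. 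Once $S_j=V_{j-1}$, the two permutation conditions collapse to $\pi_j(T_j)=S_j=\pi_j(U_{j-1})$, forcing $T_j=U_{j-1}$, and the residual sign equals $[P_{\pi_j}]_{S_j,T_j}[P_{\pi_j}^{-1}]_{T_j,S_j}=\mathds{1}$ because the relevant submatrix of $P_{\pi_j}$ is a permutation matrix whose inverse is its transpose. There are $k_j!(n_j-k_j)!$ permutations with $\pi_j(T_j)=S_j$, so $f_j=(\mathds{1}+\mathds{1})^{n_j}\,k_j!(n_j-k_j)!\,\delta_{\{S_j=V_{j-1}\}}\,\delta_{\{T_j=U_{j-1}\}}$.

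Finally I would reassemble. Substituting the constraints $V_{j-1}=S_j$ and $U_{j-1}=T_j$ (equivalently $V_j=S_{j+1}$, $U_j=T_{j+1}$) collapses the multiple sum to one over the free sets $\{S_j\}$ and $\{T_j\}$, with the scalar prefactor $\delta_{\{k_0=\dots=k_{d-1}\}}(\mathds{1}+\mathds{1})^{\sum_j n_j}\prod_j k_j!(n_j-k_j)!$ pulled out. The surviving matrix factor is $\prod_j[B_j]_{X_j,S_j}[C_{j-1\Mod{d}}]_{S_j,Y_{j-1\Mod{d}}}[A_j]_{T_j,T_{j+1\Mod{d}}}$. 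Summing out each $S_j$ via Lemma~\ref{mrc} gives $[B_jC_{j-1\Mod{d}}]_{X_j,Y_{j-1\Mod{d}}}$, while the cyclic sum $\sum_{\{T_j\}}\prod_j[A_j]_{T_j,T_{j+1\Mod{d}}}$ telescopes through Lemma~\ref{mrc} to $\sum_{T_0}[\,\prod_{j=0}^{d-1}A_j]_{T_0,T_0}=[\,\prod_{j=0}^{d-1}A_j]^{(k_0)}$, which is exactly the stated right-hand side.

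I anticipate the main obstacle to be the sign bookkeeping inside $f_j$: one must check simultaneously that the sum over $Q_j$ collapses the index sets via the character-sum identity and that the leftover permutation sign $\mathrm{sgn}_1\cdot\mathrm{sgn}_2$ is identically $\mathds{1}$ rather than merely averaging to it, since otherwise spurious signs or cancellations would survive into the final product. The remaining steps are applications of Lemma~\ref{mrc} together with elementary counting of permutations fixing a prescribed image of a $k_j$-set.
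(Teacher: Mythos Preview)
Your proposal is correct and follows essentially the same route as the paper: a Cauchy--Binet expansion of each bracket, a character sum over the sign matrices $Q_j$ that forces the index sets to coincide (yielding the factor $(\mathds{1}+\mathds{1})^{\sum n_j}$ and the constraint $k_0=\dots=k_{d-1}$), a permutation count producing $k_j!(n_j-k_j)!$, and a final recombination via Lemma~\ref{mrc} into $[B_jC_{j-1}]_{X_j,Y_{j-1}}$ and $[\prod A_j]^{(k_0)}$. The only organizational difference is that the paper separates the two inner sums---first summing over the $Q_j$ alone with $[P_jA_jP_{j+1}^{-1}]$ kept intact, then expanding again and summing over the $P_j$---whereas you bundle $M_j=Q_jP_j$ and evaluate the joint sum $f_j$ in one step; the substance is the same.
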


\begin{proof}
By Lemma \ref{mrc}, $[B_j (Q_j P_j) A_j (Q_{j+1 \Mod{d}}P_{j+1 \Mod{d}})^{-1}C_j]_{X_j,Y_j} =$\\
$\sum_{S_{1,j},T_{1,j} \in  \binom{[n_j]}{k_j},S_{2,j},T_{2,j} \in \binom{[n_{j+1 \Mod{d}}]}{k_{j+1 \Mod{d}}}}$\\ 
$[B_j]_{X_j,S_{1,j}}[Q_j]_{S_{1,j},T_{1,j}}[P_j A_j P_{j+1 \Mod{d}}^{-1}]_{T_{1,j},S_{2,j}}[Q_{j+1 \Mod{d}}]_{S_{2,j},T_{2,j}}[C_j]_{T_{2,j},Y_j}$. 

Note that $\sum_{Q_j \in \mathscr{Q}_{n_j}: j = 0, \dots, d-1} \Pi_{j = 0}^{d-1} [Q_j]_{S_{1,j},T_{1,j}} [Q_{j+1 \Mod{d}}]_{S_{2,j},T_{2,j}} =$\\
$(\mathds{1}+\mathds{1})^{\sum_{j = 0}^{d-1} n_j} \Pi_{j = 0}^{d-1} \delta_{\left\{S_{1,j} = T_{1,j} = S_{2,j-1 \Mod{d}} = T_{2,j-1 \Mod{d}}\right\}}$.

In order to have all the $\delta$s satisfied, we must have $k_0 = k_1 = \dots = k_{d-1}$. \\
Thus $\sum_{Q_j \in \mathscr{Q}_{n_j}: j = 0, \dots, d-1} \Pi_{j = 0}^{d-1} [B_j (Q_j P_j) A_j (Q_{j+1 \Mod{d}}P_{j+1 \Mod{d}})^{-1}C_j]_{X_j,Y_j} =$\\
$\delta_{\left\{k_0 = k_1 = \dots = k_{d-1} \right\}} (\mathds{1}+\mathds{1})^{\sum_{j = 0}^{d-1} n_j}\sum_{S_{1,j} \in \binom{[n_{j}]}{k_j}: j = 0, \dots, d-1}$\\
$\Pi_{j = 0}^{d-1} [B_j]_{X_j,S_{1,j}}[P_j A_j P_{j+1 \Mod{d}}^{-1}]_{S_{1,j},S_{1,j+1\Mod{d}}}[C_j]_{S_{1,j+1\Mod{d}},Y_j}$.

We reduce $\Pi_{j = 0}^{d-1} [P_j A_j P_{j+1 \Mod{d}}^{-1}]_{S_{1,j},S_{1,j+1\Mod{d}}} =$\\
$\sum_{R_{j} \in \binom{[n_j]}{k_j}, L_{j} \in \binom{[n_{j+1\Mod{d}}]}{k_{j+1\Mod{d}}}} \Pi_{j = 0}^{d-1} [P_j]_{S_{1,j},R_{j}} [A_j]_{R_{j},L_{j}} [P_{j+1 \Mod{d}}^{-1}]_{L_{j},S_{1,j+1\Mod{d}}}$.

Observe that $\sum_{P_j \in \mathscr{P}_{n_j}: j = 0, \dots, d-1} \Pi_{j = 0}^{d-1} [P_j A_j P_{j+1 \Mod{d}}^{-1}]_{S_{1,j},S_{1,j+1\Mod{d}}} =\\$
$(\Pi_{j = 0}^{d-1} k_j! (n_j-k_j)!) \sum_{R_{j} \in \binom{[n_j]}{k_j}, L_{j} \in \binom{[n_{j+1\Mod{d}}]}{k_{j+1\Mod{d}}}} \Pi_{j = 0}^{d-1} [A_j]_{R_{j},L_{j}} \delta_{\left\{R_{j} = L_{j-1\Mod{d}}: j = 0, \ldots, d-1 \right\}}=$\\
$(\Pi_{j = 0}^{d-1} k_j! (n_j-k_j)!) [\Pi_{j = 0}^{d-1} A_j]^{(k_0)}$.

Now combining everything, we get $\sum_{Q_j \in \mathscr{Q}_{n_j}: j = 0, \dots, d-1} \sum_{P_j \in \mathscr{P}_{n_j}: j = 0, \dots, d-1}$ \\
$\Pi_{j = 0}^{d-1} [B_j (Q_j P_j) A_j (Q_{j+1 \Mod{d}}P_{j+1 \Mod{d}})^{-1}C_j]_{X_j,Y_j} =$\\
$\delta_{\left\{k_0 = k_1 = \dots = k_{d-1} \right\}} (\mathds{1}+\mathds{1})^{\sum_{j = 0}^{d-1} n_j} [\Pi_{j = 0}^{d-1} A_j]^{(k_0)} (\Pi_{j = 0}^{d-1} k_j! (n_j-k_j)!) \sum_{S_{1,j} \in \binom{[n_{j}]}{k_j}: j = 0, \dots, d-1}$\\
$\Pi_{j = 0}^{d-1} [B_j]_{X_j,S_{1,j}} [C_j]_{S_{1,j+1\Mod{d}},Y_j}=$\\
$\delta_{\left\{k_0 = k_1 = \dots = k_{d-1} \right\}} (\mathds{1}+\mathds{1})^{\sum_{j = 0}^{d-1} n_j} (\Pi_{j = 0}^{d-1} k_j! (n_j-k_j)! [B_j C_{j-1 \Mod{d}}]_{X_j,Y_{j-1 \Mod{d}}}) [\Pi_{j = 0}^{d-1} A_j]^{(k_0)}$ for all $X_j \in \binom{[p_j]}{k_j}$, $Y_j \in \binom{[r_j]}{k_j}$.
\end{proof}

For the next corollary, we use this lemma from \cite{marc}.

\begin{lem}\cite{marc}\label{coeff}
Let $A \in \mathbb{K}^{n \times n}$. Then $det[xI+A] = \sum_{i = 0}^{n} x^{n-i} [A]^{(i)}$ as a polynomial in $\mathbb{K}[x]$. 
\end{lem}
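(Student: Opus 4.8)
The plan is to prove this classical identity by exploiting the multilinearity of the determinant in the columns of $xI+A$. Writing the $j$-th column of $xI+A$ as $xe_j + a_j$, where $e_j$ is the $j$-th standard basis vector and $a_j$ is the $j$-th column of $A$, I would expand $\det[xI+A]$ as a sum over all subsets $J \subseteq [n]$: for each $j \in J$ one selects the $xe_j$ contribution, and for each $j \notin J$ one selects the $a_j$ contribution. This produces $\det[xI+A] = \sum_{J \subseteq [n]} x^{|J|} \det(M_J)$, where $M_J$ is the matrix whose $j$-th column is $e_j$ for $j \in J$ and is $a_j$ for $j \notin J$.

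The core of the argument is to evaluate each $\det(M_J)$, and I claim $\det(M_J) = [A]_{\bar J, \bar J}$, the principal minor of $A$ on the complement $\bar J = [n] \setminus J$. To see this, I would simultaneously permute the rows and columns of $M_J$ (that is, conjugate by a permutation matrix, which preserves the determinant) so that the indices in $J$ come first, in order, followed by those in $\bar J$. After this reordering, the first $|J|$ columns are standard basis vectors and $M_J$ takes the block form $\left(\begin{smallmatrix} I & * \\ 0 & A[\bar J,\bar J]\end{smallmatrix}\right)$, whose determinant is $\det(A[\bar J,\bar J]) = [A]_{\bar J,\bar J}$.

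Substituting this back and reindexing by $i = |\bar J| = n - |J|$, then unwinding the definition of $[A]^{(i)}$ as the sum of the $i \times i$ principal minors of $A$, yields $\det[xI+A] = \sum_{i=0}^{n} x^{n-i} \sum_{S \in \binom{[n]}{i}} [A]_{S,S} = \sum_{i=0}^{n} x^{n-i} [A]^{(i)}$, which is the stated identity.

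The main obstacle is the sign bookkeeping in the claim $\det(M_J) = [A]_{\bar J,\bar J}$: one must confirm that the simultaneous row-and-column permutation contributes no net sign (the two permutation matrices are mutually inverse, so their determinants cancel) and that no stray $(-\mathds{1})$ factor survives the block-triangular reduction. A direct Leibniz expansion is an alternative but messier route: grouping the terms in $\sum_{\sigma} \mathrm{sgn}(\sigma) \prod_j (x\delta_{j,\sigma(j)} + A_{j,\sigma(j)})$ by the set of indices on which the $x\delta$ factor is chosen forces those indices to be fixed points of $\sigma$, again producing a principal minor on the complement, but keeping the signs disciplined there is harder, which is why I favor the multilinearity route.
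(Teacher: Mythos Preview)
Your argument is correct: expanding $\det[xI+A]$ by multilinearity in the columns and then identifying each $\det(M_J)$ with the principal minor $[A]_{\bar J,\bar J}$ via conjugation by a permutation matrix is a clean and standard route, and your sign bookkeeping (the two permutation determinants cancelling, the block-triangular reduction contributing no sign) is sound. Note, however, that the paper does not give its own proof of this lemma; it is simply quoted from \cite{marc}, so there is no in-paper argument to compare yours against.
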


This next corollary generalizes Section $5$ of \cite{marc}, which had formulas for the cases when $d \leq 2$. 


\begin{cor}
For $j = 0, \ldots, d-1$, let $A_{j},D_{j} \in \mathbb{K}^{n_{j} \times n_{j+1 \Mod{d}}}$. For $Q_j \in \mathscr{Q}_{n_j}$ and $P_j \in \mathscr{P}_{n_j}$, let $P = (P_0, \dots, P_{d-1})$ and let $Q = (Q_0, \dots, Q_{d-1})$. Define the matrices $M(P,Q) = \Pi_{j = 0}^{d-1} ((Q_j P_j) A_j (Q_{j+1 \Mod{d}}P_{j+1 \Mod{d}})^{-1}+D_j)$. Let $det[xI+\Pi_{j=0}^{d-1} A_{j}] = \sum_{i = 0}^{n_0} x^{n_0-i} p_i$, $det[xI+\Pi_{j=0}^{d-1} D_{j}] = \sum_{i = 0}^{n_0} x^{n_0-i} q_i$, and $\sum_{P,Q} det[xI+M(P,Q)] = \sum_{i = 0}^{n_0} x^{n_0-i} r_i$ be polynomials in $\mathbb{K}(x)$. Then $r_k = (\mathds{1}+\mathds{1})^{\sum_{j = 0}^{d-1} n_j} \sum_{i = 0}^{k} (\Pi_{j = 0}^{d-1} \frac{(n_j-k+i)!(n_j-i)!}{(n_j-k)!}) p_i q_{k-i}$ for all $0 \leq k \leq n_0$.
\end{cor}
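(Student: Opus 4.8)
The plan is to extract the coefficient $r_k = \sum_{P,Q}[M(P,Q)]^{(k)}$ using Lemma \ref{coeff}, and then to evaluate $\sum_{P,Q}[M(P,Q)]^{(k)}$ by reducing it to Lemma \ref{gen}. Writing $B_j = (Q_jP_j)A_j(Q_{j+1\Mod{d}}P_{j+1\Mod{d}})^{-1}$, so that $M(P,Q) = \prod_{j=0}^{d-1}(B_j + D_j)$, I would first use $[M]^{(k)} = \sum_{X \in \binom{[n_0]}{k}}[M]_{X,X}$ and expand each principal minor by Lemma \ref{mrc} (iterated Cauchy--Binet) into $[M]_{X,X} = \sum_{Z_1,\dots,Z_{d-1}}\prod_{j=0}^{d-1}[B_j+D_j]_{Z_j,Z_{j+1\Mod{d}}}$, where $Z_0 = Z_d = X$ and each $Z_j \in \binom{[n_j]}{k}$. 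Conceptually, the sign matrices $Q_j$ will force only the ``all $A$'' and ``all $D$'' contributions to survive, but the derivation below handles this automatically.

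The next step is to break each minor of a sum $[B_j+D_j]_{Z_j,Z_{j+1\Mod{d}}}$ by the generalized Laplace expansion, writing it as $\sum_{S_j,T_j}\varepsilon(S_j;Z_j)\,\varepsilon(T_j;Z_{j+1\Mod{d}})\,[B_j]_{S_j,T_j}\,[D_j]_{Z_j\setminus S_j,\,Z_{j+1\Mod{d}}\setminus T_j}$, where $S_j \subseteq Z_j$, $T_j \subseteq Z_{j+1\Mod{d}}$, $|S_j| = |T_j|$, and $\varepsilon(\cdot\,;\cdot)\in\{\mathds{1},-\mathds{1}\}$ is the Laplace sign (the parity of the sum of the ranks of the chosen set inside the ambient set). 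Only the factors $[B_j]_{S_j,T_j}$ depend on $P$ and $Q$, so after pulling the $D$-minors out of the $P,Q$-sum, the quantity $\sum_{P,Q}\prod_{j}[B_j]_{S_j,T_j}$ is precisely the one computed by Lemma \ref{gen} with $B_j = I_{n_j}$ and $C_j = I_{n_{j+1\Mod{d}}}$. That lemma gives $(\mathds{1}+\mathds{1})^{\sum_j n_j}\big(\prod_j |S_j|!\,(n_j-|S_j|)!\,\delta_{\{S_j = T_{j-1\Mod{d}}\}}\big)\,[\prod_j A_j]^{(|S_0|)}$, which is nonzero only when all the sizes $|S_j|$ equal a common value $i$ and $S_j = T_{j-1\Mod{d}}$ for every $j$.

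I would then impose these two constraints. The size condition collapses the Laplace sum to a single index $i$ ranging over $0,\dots,k$, and $[\prod_j A_j]^{(i)} = p_i$ by Lemma \ref{coeff}. The cyclic identification $T_{j-1\Mod{d}} = S_j$ makes the leftover index sets match up: setting $W_j = Z_j \setminus S_j$ (of size $k-i$), each surviving $D$-minor becomes $[D_j]_{W_j,\,W_{j+1\Mod{d}}}$. The crucial bookkeeping point -- the step I expect to be the main obstacle -- is that the product of Laplace signs collapses to $\mathds{1}$: under $T_{j-1\Mod{d}} = S_j$ the column sign $\varepsilon(T_j;Z_{j+1\Mod{d}})$ of factor $j$ equals the row sign $\varepsilon(S_{j+1\Mod{d}};Z_{j+1\Mod{d}})$ of factor $j+1$, so each factor $\varepsilon(S_j;Z_j)$ occurs exactly twice around the cycle and the total sign is a perfect square. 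Matching this cyclic index bookkeeping between the Laplace expansion and the hypotheses of Lemma \ref{gen} is where the care is needed.

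Finally, with the sign equal to $\mathds{1}$, I would reorganize the remaining sum. Reparametrizing $Z_j = S_j \sqcup W_j$, the $A$-contribution $p_i\prod_j i!\,(n_j-i)!$ no longer depends on the specific sets $S_j$, so each $S_j$ can be summed out freely: for fixed $W_j$ there are $\binom{n_j-k+i}{i}$ choices of $S_j \subseteq [n_j]\setminus W_j$ of size $i$, contributing $\prod_j \binom{n_j-k+i}{i}$. Summing the $D$-minors $\prod_j [D_j]_{W_j,\,W_{j+1\Mod{d}}}$ over $W_1,\dots,W_{d-1}$ by Lemma \ref{mrc} and then over $W_0$ gives $[\prod_j D_j]^{(k-i)} = q_{k-i}$. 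Collecting the factors and simplifying $i!\,(n_j-i)!\binom{n_j-k+i}{i} = \frac{(n_j-k+i)!(n_j-i)!}{(n_j-k)!}$ produces exactly the claimed formula $r_k = (\mathds{1}+\mathds{1})^{\sum_j n_j}\sum_{i=0}^k \big(\prod_j \frac{(n_j-k+i)!(n_j-i)!}{(n_j-k)!}\big) p_i q_{k-i}$.
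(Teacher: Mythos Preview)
Your proposal is correct and follows essentially the same route as the paper: reduce via Lemma~\ref{coeff} to $\sum_{P,Q}[M(P,Q)]^{(k)}$, expand $[M]^{(k)}$ by Lemma~\ref{mrc} over $Z_j\in\binom{[n_j]}{k}$, Laplace-expand each $[B_j+D_j]_{Z_j,Z_{j+1\Mod{d}}}$, apply Lemma~\ref{gen} with identity outer matrices to force all sizes equal to a common $i$ and $S_j=T_{j-1\Mod{d}}$, and finish by the same reparametrization and $\binom{n_j-k+i}{i}$ count. Your explicit treatment of the cyclic sign cancellation (each $\varepsilon(S_j;Z_j)$ appearing twice) is in fact more careful than the paper, which silently drops the $(-\mathds{1})^{\|U_j+V_j\|_1}$ factors after imposing $U_j=V_{j-1\Mod{d}}$.
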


\begin{proof}
By Lemma \ref{coeff}, it is equivalent to show that $\sum_{P,Q} [M(P,Q)]^{(k)} =  (\mathds{1}+\mathds{1})^{\sum_{j = 0}^{d-1} n_j} \sum_{i = 0}^{k} (\Pi_{j = 0}^{d-1} \frac{(n_j-k+i)!(n_j-i)!}{(n_j-k)!}) [\Pi_{j=0}^{d-1} A_{j}]^{(i)} [\Pi_{j=0}^{d-1} D_{j}]^{(k-i)}$.
First we observe that $[M(P,Q)]^{(k)} = $\\
$\sum_{X_j \in \binom{[n_j]}{k}: j = 0, \dots, d-1} \Pi_{j = 0}^{d-1} [(Q_j P_j) A_j (Q_{j+1 \Mod{d}}P_{j+1 \Mod{d}})^{-1}+D_j]_{X_j, X_{j+1 \Mod{d}}}$, where $[(Q_j P_j) A_j (Q_{j+1 \Mod{d}}P_{j+1 \Mod{d}})^{-1}+D_j]_{X_j, X_{j+1 \Mod{d}}} =$\\
$\sum_{i_j = 0}^{k} \sum_{U_j,V_j \in \binom{[k]}{i_j}}$\\
$(-\mathds{1})^{||U_j+V_j||_1}[Q_j P_j A_j P_{j+1 \Mod{d}}^{-1} Q_{j+1 \Mod{d}}]_{U_j(X_j),V_j(X_{j+1 \Mod{d}})}[D_j]_{\overline{U_j}(X_j),\overline{V_j}(X_{j+1 \Mod{d}})}$.

By Lemma \ref{gen}, $\sum_{P,Q} \Pi_{j = 0}^{d-1} [Q_j P_j A_j P_{j+1 \Mod{d}}^{-1}Q_{j+1 \Mod{d}}]_{U_j(X_j),V_j(X_{j+1 \Mod{d}})} =$\\
$\delta_{\left\{i_0 = i_1 = \dots = i_{d-1}\right\}} (\mathds{1}+\mathds{1})^{\sum_{j = 0}^{d-1} n_j} (\Pi_{j = 0}^{d-1} i_j! (n_j-i_j)! [I_{n_j}]_{U_j(X_j),V_{j-1 \Mod{d}}(X_j)}) [\Pi_{j = 0}^{d-1} A_j]^{(i_0)} =$\\
$\delta_{\left\{i_0 = i_1 = \dots = i_{d-1}\right\}} (\mathds{1}+\mathds{1})^{\sum_{j = 0}^{d-1} n_j} (\Pi_{j = 0}^{d-1} i_j! (n_j-i_j)! \delta_{\left\{U_j = V_{j-1 \Mod{d}} \right\}}) [\Pi_{j = 0}^{d-1} A_j]^{(i_0)}$.

Thus $\sum_{P,Q} [M(P,Q)]^{(k)} = $\\
$(\mathds{1}+\mathds{1})^{\sum_{j = 0}^{d-1} n_j} \sum_{i = 0}^{k} (\Pi_{j = 0}^{d-1} i!(n_j-i)!)[\Pi_{j=0}^{d-1} A_{j}]^{(i)}\sum_{X_j \in \binom{[n_j]}{k}: j = 0, \dots, d-1}\sum_{U_j \in \binom{[k]}{i}: j = 0, \dots, d-1}$\\
$\Pi_{j = 0}^{d-1} [D_j]_{\overline{U_j}(X_j),\overline{U_{j+1 \Mod{d}}}(X_{j+1 \Mod{d}})}$.

As in \cite{marc}, we write \\
$\sum_{X_j \in \binom{[n_j]}{k}: j = 0, \dots, d-1}\sum_{U_j \in \binom{[k]}{i}: j = 0, \dots, d-1}\Pi_{j = 0}^{d-1} [D_j]_{\overline{U_j}(X_j),\overline{U_{j+1 \Mod{d}}}(X_{j+1 \Mod{d}})}=$
$\sum_{X_j \in \binom{[n_j]}{k}: j = 0, \dots, d-1}\sum_{W_j \in \binom{[n_j]}{k-i}: j = 0, \dots, d-1} \Pi_{j = 0}^{d-1} [D_j]_{W_j,W_{j+1 \Mod{d}}}\delta_{W_j \subset X_j} =$\\
$\sum_{W_j \in \binom{[n_j]}{k-i}: j = 0, \dots, d-1} (\Pi_{j = 0}^{d-1} \binom{n_j-k+i}{i} [D_j]_{W_j,W_{j+1 \Mod{d}}}) =$\\
$(\Pi_{j = 0}^{d-1} \binom{n_j-k+i}{i}) [\Pi_{j = 0}^{d-1} D_j]^{(k-i)}$, finishing the proof.
\end{proof}

We finish with an unsolved question from Marcus: 

\begin{que}
Is it possible to derive versions of the last few results in this paper with permutation matrices $P$ in the sums but no sign matrices $Q$?
\end{que}

This question is open even in the case that $d = 2$.

\section{Acknowledgments}
Thanks to Adam Marcus for helpful ideas and questions about discrete unitary invariance.

\end{document}